\def\Cal{\mathcal}
\def\C{{\Cal C}}
\def\S{{\Cal S}}
\def\vnk{V_{n,k}}
\def\bbr{{\Bbb R}}
\def\bbc{{\Bbb C}}
\def\bbs{{\Bbb S}}
\def\grad{{\hbox{\rm grad}}}
\def\vnk{V_{n,k}}
\def\rn{\bbr^n}
\def\sn{S^{n-1}}
\def\part{\partial}
\def\intl{\int\limits}
\def\Gam{\Gamma}
\def\a{\alpha}
\def\om{\omega}
\def\Del{\Delta}
\def\del{\delta}
\def\vp{\varphi}
\def\gam{\gamma}
\def\lam{\lambda}
\def\e{\varepsilon}
\def\chi{{\bf 1}}
\def\snm1{\bbs^{n-1}}
\def\Re{\mathrm{Re}\,}
\def\intl{\int\limits}
\def\grad{\mathrm{grad}}
\def\vnk{\mathrm{V}_{n,k}}
\def\Cs{\mathscr{C}}
\def\Cs{\mathscr{C c 1234}}
\def\cd{\stackrel{*}{\C}\!{}_{m, k}^\lam}
\def\sd{\stackrel{*}{\S}\!{}_{m, k}^\lam}
\def\cd0{\stackrel{*}{\C}\!{}_{m, k}^\lam}
\def\sd0{\stackrel{*}{\S}\!{}_{m, k}^\lam}
\def\ncd0{\stackrel{*}{\Cs}\!{}_{m, k}^\lam}
\newtheorem{theorem}{Theorem}[section]
\newtheorem{lemma}[theorem]{Lemma}
\theoremstyle{definition}
\newtheorem{definition}[theorem]{Definition}
\theoremstyle{remark}
\newtheorem{remark}[theorem]{Remark}
\numberwithin{equation}{section}
\theoremstyle{corollary}
\newtheorem{proposition}[theorem]{Proposition}
\numberwithin{equation}{section}
\newcommand{\be}{\begin{equation}}
\newcommand{\ee}{\end{equation}}
\newcommand{\bea}{\begin{eqnarray}}
\newcommand{\eea}{\end{eqnarray}}
\newcommand{\Bea}{\begin{eqnarray*}}
\newcommand{\Eea}{\end{eqnarray*}}
\def\sideremark#1{\ifvmode\leavevmode\fi\vadjust{\vbox to0pt{\vss
 \hbox to 0pt{\hskip\hsize\hskip1em
\vbox{\hsize2cm\tiny\raggedright\pretolerance10000
 \noindent #1\hfill}\hss}\vbox to8pt{\vfil}\vss}}}%
\begin{document}

\title[The Fourier Transform Approach  ]{The Fourier Transform Approach to Inversion of  $\lam$-Cosine and Funk Transforms on the Unit Sphere}

\author{ B. Rubin}

\address{Department of Mathematics, Louisiana State University, Baton Rouge,
Louisiana 70803, USA}
\email{borisr@lsu.edu}

\subjclass[2010]{Primary 44A12; Secondary 42B10, 46F12}



\begin{abstract} We use the  classical Fourier analysis on $\rn$ to introduce analytic families of weighted differential operators on the unit sphere. These operators are polynomial functions of the usual Beltrami-Laplace operator. New inversion formulas are obtained for  totally geodesic Funk transforms on the sphere and the relevant  $\lam$-cosine transforms.
 \end{abstract}

\maketitle

\section{Introduction}


\setcounter{equation}{0}
\noindent
The $\lam$-cosine transform
\be\label{af000}    (C^\lam
f)(u)=   \intl_{\sn}  f(v)
|u \cdot v|^{\lam} \,dv, \qquad u\in \sn, \ee
on the unit sphere $\sn$ in $\rn$, $n \ge 3$,    arises in different branches  of mathematics. The terminology for $\lam =1$ amounts to Lutwak \cite [p. 385]{Lu} in convex geometry.
There exist many modifications and generalizations of this operator; see, e.g.,  \cite {OPR} and references therein.
   The associated operator
   \be\label{af000r}    (Ff)(u)=   \intl_{\{v\in \sn :\,  u \cdot v=0\}} \!\!\!\! f(v)
 \,d_u v, \ee
where $d_u v$ is the relevant probability measure, is called the Funk transform; cf. \cite{Fu11, Fu13} for $n=3$. In convex geometry, operators   (\ref{af000}) are also known as the $\a$-cosine transforms or $p$-cosine transforms. Operators (\ref{af000r}) are sometimes called spherical Radon transforms or Minkowskli-Funk transforms. These operators play an important role in the study of projections and sections of convex bodies; see, e.g.,  \cite {Ga, Ko, Zha}, to mention a few.

A variety of diverse inversion formulas for the operators (\ref{af000}) and (\ref{af000r}) can be found in the literature; see, e.g., \cite {GGG, H11, P, Ru15}, and references therein. Among them, Helgason's polynomial type inversion formula for $F$
is probably the most beautiful. For example, if $n$ is even, then an even smooth  function $f$ can be reconstructed from $\vp= Ff$ as
\be\label {mjbt} f= P(\Del_S)F \vp,\ee
where $P(\Del_S)$ is a polynomial of the Beltrami-Laplace operator $\Del_S$. This result was obtained by Helgason \cite[p. 284]{H59} in the more general setting  for totally geodesic submanifolds  of constant curvature spaces of arbitrary dimension; see also \cite[p. 133]{H11}. Because of the inevitable dimensionality restriction, (\ref{mjbt}) has a local nature.
 The corresponding non-local 
 polynomial type inversion formulas, which include inversion of (\ref{af000r})
  for $n$ odd, were obtained by the author in \cite [Theorem 1.2]{Ru02}; see also \cite [Section 5.1.6] {Ru15}.
 The main tool in the proof of these formulas  is spherical harmonic decomposition.

\vskip 0.2 truecm

\noindent {\bf The aim of the paper and motivation.} We plan  to obtain polynomial type inversion formulas  for (\ref{af000r}) and (\ref{af000})  without using spherical harmonics and in a more compact form. Specifically, instead of a  polynomial of the Beltrami-Laplace operator we introduce one ``weighted'' differential operator on $\sn$. The latter can be  defined by making use of homogeneous extension onto $\rn \setminus \{0\}$ with subsequent multiplication by the weight function, like $|x|^\lam$, and implementation of the Fourier transform technique.

  The suggested approach is motivated by our conjecture that similar inversion problems for  $\lam$-cosine and Funk type transforms on the Stiefel and Grassmann manifolds can be treated using  the standard Fourier Analysis on the ambient  matrix space. Some results in this direction are obtained in
 \cite {Ru13}. We plan to address  this topic in   another publication.

\vskip 0.2 truecm

\noindent {\bf Plan of the paper.}  Section 2 deals  with the Funk transform (\ref{af000r}) and the relevant $\lam$-cosine transforms. The latter are suitably normalized. We introduce weighted differential operators on $\sn$ and use them to obtain new  inversion formulas for our transforms. The Fourier transform technique in this section  amounts  apparently to Semyanistyi \cite{Se}; see also \cite {Ko, Sa83, Sa83a} and \cite [Lemma A.92]{Ru15}.  Section 3 contains basic facts about the corresponding $\lam$-sine transforms. In Section 4 we extend inversion formulas from Section 2 to the case of lower-dimensional totally geodesic Funk transforms and the relevant $\lam$-cosine transforms.

\section{ $\lam$-Cosine Transforms, Funk Transforms, and Weighted Beltrami-Laplace Operators} \label{sphere1}

\noindent  The main reference for this section is \cite [Sections 5.1, A.13]{Ru15}. For the sake of convenience, we normalize the integral (\ref {af000}) and denote
 \be\label{af}    (\Cs^\lam
f)(u)= \gam (\lam)   \intl_{\sn}  f(v)
|u \cdot v|^{\lam} \,d_*v,\qquad u \in
\sn,  \ee
where $d_*v$ stands for the standard $O(n)$-invariant probability measure on $\sn$,
\be\label{beren}
\gam (\lam)\!=\!\frac{\pi^{1/2}\,\Gamma( -\lam/2)}{\Gamma (n/2)\, \Gamma ((\lam +1)/2)}, \qquad \Re \, \lam \!>\!-1, \quad \lam
\!\neq \!0,2,4
, \ldots .\ee

It is  assumed that  $f \in C^\infty (\sn)$, though many results  extend to wider classes of functions. For technical reasons we prefer the following definition of the space $C^\infty (\sn)$, which, however, is equivalent to the standard one via atlas on $\sn$; see, e.g., \cite [Proposition 1.29]{Ru15}.

\begin{definition} We say that $f \in C^\infty (\sn)$ if the extended function
\be\label{dlim1}
 \tilde f(x)=f(x/|x|), \qquad  x \in \rn \setminus \{0\},\ee
 belongs to $C^\infty  (\rn \setminus \{0\})$ in the usual sense.
\end {definition}

We  assume $f$ to be even, i.e.,  $f \in C^\infty_{even} (\sn)$, because odd functions form the kernel (null space) of $\Cs^\lam$.
If $f \in C^\infty_{even} (\sn)$, then $\Cs^\lam f$ extends to all $\lam \in \bbc$  meromorphically
 with the  poles $ \lam \!= \!0,2,4, \ldots $, so that $a.c. \Cs^\lam f$ (the analytic continuation of $\Cs^\lam f$) belongs to $C^\infty_{even} (\sn)$.
   The limit case
 $\lam=-1$ represents a constant multiple  of  the  Funk transform (\ref{af000r}):
\be\label{lim1}
\Cs^{-1} f\equiv \lim\limits_{\lam \to -1} \Cs^\lam f\!= \!c_n\, Ff,\qquad c_n\!=\!\frac{\pi^{1/2}}{\Gamma ((n\!-\!1)/2)}.
\ee

\subsection{From the Euclidean Fourier Analysis to the Spherical Cosine Transform}\label {Fourier}

  Let $\Delta_S$ be the Beltrami-Laplace operator on $\sn$, that can be  defined  by the formula
  \[(\Delta_S f) (x/|x|)=  |x|^{2}(\Del \tilde f)(x), \quad  \quad  x \in \rn \setminus \{0\}.\]
  Here $\tilde f$ is the extended function  (\ref{dlim1}) and $\Del$ is  the usual Laplacian or $\rn$. The notation $I$ will be used for the identity operator. For $f \in C^\infty (\sn)$, the  expression $\Cs^\lam f$ is understood (if necessary) as the meromorphic continuation of the absolutely convergent integral  (\ref{af}).

 \begin{proposition}\label {liut0}  If $f \in C^\infty_{even} (\sn)$, $\lam \in \bbc \setminus \{ -2, 0,2,4, \ldots\}$, then
 \be\label{FueedvbcAAA} -\frac{1}{4}\,[\Delta_S+ (\lam +2)(n+\lam) I]\,  \Cs^{\lam +2} f =\Cs^\lam f.  \ee
  \end{proposition}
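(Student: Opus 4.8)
The plan is to exploit the homogeneous extension to $\rn$ and convert the identity into a statement about the Euclidean Fourier transform of a radial-type distribution. First I would write the normalized $\lam$-cosine transform as convolution on the sphere against the kernel $|u\cdot v|^\lam$, and recall the classical fact (Semyanistyi, see also \cite[Lemma A.92]{Ru15}) that the distribution $|t|^\lam$ on $\bbr$, up to the normalizing constant built into $\gam(\lam)$, has Fourier transform a constant multiple of $|\t|^{-\lam-1}$, with the poles at $\lam=0,2,4,\dots$ exactly accounting for the poles of $\gam(\lam)$; tensoring with the Dirac mass in the orthogonal $(n-1)$ directions turns $|u\cdot v|^\lam$ into a homogeneous distribution on $\rn$ whose Fourier transform is again of the same form. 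The upshot should be a clean relation: if one extends $f$ homogeneously of some fixed degree and multiplies by $|x|^\lam$ in the appropriate places, then $\Cs^\lam f$, suitably extended, corresponds under the Fourier transform to multiplication by $|\t|^{-\lam-1}$ (times $\hat f$-type data).

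Second, with that dictionary in hand, the recursion $\Cs^{\lam+2}\mapsto\Cs^\lam$ becomes the algebraic identity relating $|\t|^{-\lam-3}$ to $|\t|^{-\lam-1}$ on the Fourier side, i.e.\ multiplication by $|\t|^{2}$, which is the operator $-\Del$ back on the $\rn$ side. Then I would push $-\Del$ through the homogeneous extension: using the definition $(\Del_S f)(x/|x|)=|x|^2(\Del\tilde f)(x)$, and the standard formula for the Euclidean Laplacian acting on a function of the form $|x|^{a}g(x/|x|)$, namely $\Del\big(|x|^a g(x/|x|)\big)=|x|^{a-2}\big[\Del_S g + a(a+n-2)g\big](x/|x|)$, one converts the factor $|\t|^2\leftrightarrow-\Del$ into the spherical operator $-[\Del_S + a(a+n-2)I]$ for the relevant degree $a$. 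Matching the degree bookkeeping — the extra $|x|^\lam$ weight shifts the homogeneity by $\lam$, and an overall even reflection-symmetric normalization produces the shift to $a=$ something like $-(n+\lam)/2$ or its partner — should yield precisely $a(a+n-2)=(\lam+2)(n+\lam)$ up to the $-\tfrac14$ factor, which comes from the square of the $|\t|$ scaling (two applications of a half-order operation, or equivalently the $2$ in $\lam+2$ being split symmetrically). I would verify the constants by tracking $\gam(\lam+2)/\gam(\lam)$ through the Gamma-function identities in \eqref{beren}, since the ratio of those constants must absorb any numerical discrepancy.

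A cleaner, self-contained variant avoids the Fourier transform entirely: differentiate the integral \eqref{af} directly. Writing $\Cs^{\lam+2}f$ with kernel $|u\cdot v|^{\lam+2}$ and applying $\Del_S$ in the $u$ variable, one uses the chain rule together with the homogeneous extension: $\Del_S$ acting on $u\mapsto |u\cdot v|^{\lam+2}$ (restricted to $\sn$) is computed by extending to $x\mapsto|x\cdot v|^{\lam+2}$, applying $\Del$ on $\rn$ — which gives $(\lam+2)(\lam+1)|x\cdot v|^{\lam}$ since $|x\cdot v|^{\lam+2}$ is a function of the single linear form $x\cdot v$ — then restricting and accounting for the $|x|^2$ factor and the non-homogeneity of the restriction to $\sn$. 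Carefully bookkeeping the cross terms (the ones involving the radial derivative, which produce the lower-order term $(n+\lam)$-type contribution) gives $[\Del_S+(\lam+2)(n+\lam)I]|u\cdot v|^{\lam+2}=$ const$\cdot|u\cdot v|^\lam$ pointwise in $v$, and then one integrates against $f(v)\,d_*v$ and absorbs the constant via $\gam(\lam+2)$ versus $\gam(\lam)$.

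The main obstacle I anticipate is pinning down the two numerical constants simultaneously: the factor $-\tfrac14$ and the exact argument $(\lam+2)(n+\lam)$ of the lower-order term. Both depend on a consistent choice of homogeneity degree for the extension and on the precise normalizing constant $\gam(\lam)$, and the ratio $\gam(\lam+2)/\gam(\lam)$ must be computed via $\Gamma(-\lam/2)=-\tfrac{\lam}{2}\Gamma(-\lam/2-1+1)$-type recurrences (and similarly for the $\Gamma((\lam+1)/2)$ factor). A sign error or an off-by-one in the degree will corrupt both constants, so the verification step — plugging in, say, a spherical harmonic of degree $j$ (for which the eigenvalue of $\Cs^\lam$ and of $\Del_S$ are classically known) — is the reliable cross-check I would run to confirm the identity before considering the proof complete. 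The meromorphic-continuation claim ($\lam\in\bbc\setminus\{-2,0,2,4,\dots\}$) then follows automatically since both sides are meromorphic in $\lam$ and agree on an open set where the integrals converge absolutely.
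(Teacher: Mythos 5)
Your first (Fourier) route is essentially the paper's proof: the ``dictionary'' you describe is exactly the duality $\left(E_{\lam}\,\Cs^\lam f, \hat\om\right)=c_\lam\left(E_{-\lam-n}f,\om\right)$ with $c_\lam=2^{n+\lam}\pi^{n/2}$ (formula (\ref{eq5})), the recursion is realized by pairing with $\om_1(x)=|x|^2\om(x)$ so that $\Del[E_{\lam+2}\Cs^{\lam+2}f]=-4\,E_\lam\Cs^\lam f$ (the $-\tfrac14$ is just $-c_\lam/c_{\lam+2}$, not a ``square of a half-order operation''), and the passage to $\Delta_S$ is your formula $\Del\bigl(|x|^a g(x/|x|)\bigr)=|x|^{a-2}\bigl[\Delta_S g+a(a+n-2)g\bigr]$ with $a=\lam+2$, which indeed gives $(\lam+2)(n+\lam)$ with no further degree ambiguity. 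Your second, ``self-contained'' variant is genuinely different from the paper and works cleanly: for $\Re\lam>0$ one has $\Del\,|x\cdot v|^{\lam+2}=(\lam+2)(\lam+1)|x\cdot v|^{\lam}$, the same homogeneity formula (with no cross terms, by Euler's identity) converts this into $[\Delta_S+(\lam+2)(n+\lam)I]\,|u\cdot v|^{\lam+2}=(\lam+2)(\lam+1)|u\cdot v|^{\lam}$ on $\sn$, and the ratio $\gam(\lam+2)/\gam(\lam)=-4/\bigl((\lam+2)(\lam+1)\bigr)$ turns the constant into exactly $-4$; meromorphic continuation then removes the restriction on $\lam$. This direct route buys elementarity (no distribution theory, no Fourier transform) at the cost of an explicit convergence/differentiation-under-the-integral argument near $u\cdot v=0$, whereas the paper's Fourier argument generalizes at once to powers $\Del^\ell$ (Proposition \ref{liut0b}) and to the Stiefel/Grassmann setting the author has in mind. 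Your only soft spots are presentational: the ``multiplication by $|\tau|^{-\lam-1}$'' framing is looser than the actual two-sided homogeneous-distribution identity, and you leave the constants to a final verification rather than computing them, but both checks come out correct.
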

\begin{proof}
 This statement   was  proved in \cite [p. 285]{Ru15} using the theory of spherical harmonics. Below we suggest a simple proof, which does not need any knowledge of spherical harmonics and  relies on the Fourier transform technique exclusively.

  Let $S(\bbr^n)$ be the Schwartz space
  of rapidly decreasing smooth functions $\om$ on $\rn$, and let
 $ \hat\om(y)=\int_{\bbr^n} \om (x) e^{i x\cdot y}dx$
  be the Fourier transform  of $\om$. Given a function $f$ on $\sn$, we denote by
 \be\label {lase} (E_\lam f)(x)=|x|^{\lam} f(x/|x|), \qquad x \in \rn \setminus \{0\},\ee
    the $\lam$-homogeneous extension of $f$. Then for all complex $\lam \neq \!0,2,4, \ldots $,
  \be\label{eq5}
 \left(E_{\lam}\, \Cs^\lam f, \hat\om\right)\!=\!c_\lam \, \left (E_{-\lam-n}  f,  \om\right),   \qquad c_\lam \!=\!2^{n+\lam}\, \pi^{n/2},\ee
 where   both sides  are understood in the sense of analytic continuation;  see, e.g., \cite [pp. 526, 527]{Ru15}.
 This formula  gives the Fourier transform of the $S'$-distribution $E_{-\lam-n}  f$.
 Setting $\om_1 (x)=|x|^{2} \om (x)$ and
using (\ref{eq5}) repeatedly, we obtain
\bea
\left (\Del [E_{\lam +2} \Cs^{\lam +2} f], \hat\om\right)&=& -\left (E_{\lam +2} \Cs^{\lam +2} f, \hat\om_1 \right)\nonumber\\
&=& -c_{\lam +2} \, \left (E_{-\lam-2-n}  f,  \om_1\right)\nonumber\\&=&  -c_{\lam +2} \,\left (E_{-\lam-n}  f,  \om\right)\nonumber\\
&=&-\frac{c_{\lam +2}}{c_{\lam}} \left(E_{\lam}\, \Cs^\lam f, \hat\om\right)\nonumber\\
\label {abb}&=& -4\left(E_{\lam}\, \Cs^\lam f, \hat\om\right), \eea
 that is,  $\Del [E_{\lam +2} \Cs^{\lam +2} f]= -4 \,E_{\lam}\, \Cs^\lam f$ in the $S'$-sense.  Because both sides of the last equality  are smooth away from the origin, the pointwise equality follows:
\be\label {abbc}
 \Del \,[ |x|^{\lam +2} (\Cs^{\lam +2} f) (\tilde x)]=-4\,  |x|^{\lam} (\Cs^{\lam} f) (\tilde x),  \qquad  \tilde x=\frac{x}{|x|}. \ee
   Now, using the product formula
\be\label {lyt} \Del (\vp \psi)= \vp\Del \psi +2\,(\grad \,\vp )\cdot (\grad \,\psi) +\psi\Del \vp\ee
with $\vp (x)= |x|^{\lam +2}$ and $\psi (x)= (\Cs^{\lam +2} f) (\tilde x)$, and setting $|x|=1$, after simple calculations  we arrive at (\ref{FueedvbcAAA}); cf.  \cite [p. 491]{Ru15}.
\end{proof}

Let us focus on the ``Fourier part'' of the above reasoning and drop the second part, related to (\ref{lyt}). Denote
    \be\label {lase1}
(\Delta_{\lam} f) (u)= -\frac{1}{4}  (\Del E_{\lam +2} f)(x)\Big |_{x=u}, \qquad u \in \sn. \ee
Then (\ref{abbc}) yields an alternative version of Proposition \ref{liut0}:
 \be\label {lase2}
\Delta_{\lam} \Cs^{\lam +2} f=\Cs^{\lam} f,  \qquad \lam \in \bbc \setminus \{-2, 0,2,4, \ldots\}.\ee

More generally, let us replace the Laplace operator $\Del$ by its integer power $\Del^\ell$, $\ell \ge 0$. Then (\ref{abb}) becomes
\be\label {abbv}
\left (\Del^\ell [E_{\lam +2\ell} \Cs^{\lam +2\ell} f], \hat\om\right)= (-4)^\ell\, \left(E_{\lam}\, \Cs^\lam f, \hat\om\right),\ee
and, as above,
\be\label {abbc1}
 (\Del^\ell E_{\lam +2\ell} \Cs^{\lam +2\ell} f) (x)=(-4)^\ell\,  (E_{\lam}\, \Cs^\lam f)(x), \qquad x\neq 0. \ee
Denote
\be\label {lase1h}
(\Delta_{\lam, \ell} f) (u)= \left (-\frac{1}{4}\right )^\ell  (\Del^\ell E_{\lam +2\ell} f)(x)\Big |_{x=u}, \qquad u \in \sn. \ee
As a result, we obtain the following generalization of (\ref{lase2}).

 \begin{proposition}\label {liut0b}  If $f \in C^\infty_{even} (\sn)$, then
 \be\label {lase2h}
\Delta_{\lam, \ell} \Cs^{\lam +2\ell} f=\Cs^{\lam} f \ee
  for all complex $\lam$ satisfying $ \lam +2\ell\neq 0,2,4, \ldots$.
 \end{proposition}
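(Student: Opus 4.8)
The plan is to reduce Proposition~\ref{liut0b} to the identity~(\ref{abbc1}), which has already been established by iterating the Fourier-transform relation~(\ref{eq5}) exactly $\ell$ times, precisely as in the displayed computation~(\ref{abbv}). So the real content has been packaged in~(\ref{abbc1}); what remains is essentially to translate the statement about homogeneous extensions on $\rn \setminus \{0\}$ into the statement about functions on $\sn$ encoded by the operator $\Delta_{\lam, \ell}$. Concretely, I would argue as follows. By~(\ref{lase}), the function $E_{\lam + 2\ell} \Cs^{\lam + 2\ell} f$ is the $(\lam+2\ell)$-homogeneous extension of $\Cs^{\lam + 2\ell} f \in C^\infty_{even}(\sn)$ (this smoothness is guaranteed for the analytic continuation, as recalled after~(\ref{beren})), hence it lies in $C^\infty(\rn \setminus \{0\})$; likewise the right-hand side $E_\lam\, \Cs^\lam f$ is smooth away from the origin. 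Therefore both sides of~(\ref{abbc1}) are genuine smooth functions on $\rn \setminus \{0\}$, and the equality there is a pointwise equality of smooth functions, not merely a distributional one.

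Next I would restrict~(\ref{abbc1}) to the unit sphere $|x| = u \in \sn$. On the right side, $E_\lam$ applied to anything reduces to that function on $\sn$ when $|x| = 1$, since $|x|^\lam = 1$ there; thus $(E_\lam \,\Cs^\lam f)(u) = (\Cs^\lam f)(u)$. Multiplying~(\ref{abbc1}) through by $(-1/4)^\ell$ and setting $x = u$ then gives exactly
\[
  \left(-\frac14\right)^\ell (\Del^\ell E_{\lam + 2\ell}\, \Cs^{\lam + 2\ell} f)(x)\Big|_{x=u} = \Cs^\lam f(u),
\]
and the left-hand side is by definition~(\ref{lase1h}) the value $(\Delta_{\lam, \ell}\, \Cs^{\lam+2\ell} f)(u)$. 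This is~(\ref{lase2h}).

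The only point needing care — the step I expect to be the mild obstacle — is the passage from the $S'$-identity to the pointwise identity~(\ref{abbc1}), i.e.\ checking that the distribution $E_{\lam+2\ell}\,\Cs^{\lam+2\ell}f$, which a priori is only known as a tempered distribution obtained by analytic continuation, agrees near $\sn$ with a classical smooth function so that $\Del^\ell$ may be computed classically. This is exactly the ``both sides are smooth away from the origin, so the pointwise equality follows'' remark used already in the proof of Proposition~\ref{liut0} between~(\ref{abb}) and~(\ref{abbc}); here one simply invokes the same fact with $\ell$ in place of $1$, using that $\Cs^{\lam + 2\ell}f$ and $\Cs^\lam f$ are in $C^\infty_{even}(\sn)$ for the relevant range of $\lam$ (as $\lam + 2\ell \notin \{0,2,4,\dots\}$ ensures). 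A secondary bookkeeping point is that the analytic-continuation identity~(\ref{eq5}) must be applied at the parameter values $\lam, \lam+2, \dots, \lam + 2\ell$; all of these avoid the excluded poles under the hypothesis $\lam + 2\ell \neq 0, 2, 4, \dots$ together with $\lam \notin \{0,2,4,\dots\}$ being implied thereby, so the chain~(\ref{abbv}) is legitimate. With these observations in place the proposition follows immediately.
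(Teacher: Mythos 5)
Your proposal is correct and follows the paper's own route exactly: the paper derives (\ref{lase2h}) by iterating (\ref{eq5}) to get the distributional identity (\ref{abbv}), upgrading it to the pointwise identity (\ref{abbc1}) via smoothness away from the origin, and restricting to $\sn$ using the definition (\ref{lase1h}). Your bookkeeping remark that $\lam+2\ell\neq 0,2,4,\ldots$ forces all intermediate parameters to avoid the poles is accurate and is implicitly used in the paper as well.
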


The new operator $\Delta_{\lam, \ell}$ can be regarded as
 a ``weighted'' differential operator on $\sn$, thanks to the power weight $|x|^\lam$ in (\ref{lase}). Clearly,  $\Delta_{\lam, \ell}$ is a polynomial function of the Beltrami-Laplace operator $\Delta_S$.

\subsection{The Logarithmic  Cosine Transform}

We will also need the  logarithmic analogue of the  cosine transform (\ref{af}):
\be\label{zaqcvr6}
(\Cs_{log}f)(u)= \frac{2}{\Gam(n/2)}\intl_{\sn} f(v) \log {1\over |u \cdot v|}\, d_*v. \ee
 If
$\int_{\sn} f(v)\, d_*v=0$,  then  \cite[p. 528]{Ru15}
\be\label{zaqcvr8}
\lim\limits_{\lam\to 0} \Cs^\lam f = \Cs_{log}f. \ee

The logarithmic  cosine transform can be used to extend (\ref{lase2}) to the excluded values of $\lam$. For our purposes, it suffices to consider $\lam=-2$, when $\Cs^{-2}f$ (the analytic continuation of $\Cs^{\lam}f$  at $\lam =-2$) is well defined, but $\Cs^{0}f$ is, in general,  not defined.

\begin{lemma} \label{zret8n}
If $f\in C^\infty_{even} (\sn)$ and  $\int_{\sn} f(v)\, d_*v=0$,  then
 \be\label {lase2s}
\Delta_{-2} \Cs_{log}f  =\Cs^{-2} f.\ee
\end{lemma}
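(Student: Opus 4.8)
The plan is to mimic the proof of Proposition \ref{liut0} (or rather its streamlined version leading to \eqref{lase2}), but with the ordinary $\lam$-homogeneous extension $E_\lam$ replaced by a \emph{logarithmic} homogeneous extension, and with the Fourier transform identity \eqref{eq5} replaced by its derivative in $\lam$ at $\lam=0$. First I would introduce, for a function $f$ on $\sn$ with $\int_{\sn} f\,d_*v=0$, the distribution $(E_{\log}f)(x)=\log|x|\cdot f(x/|x|)$ together with the companion $(E_{-n}^{\log}f)(x)=|x|^{-n}\log|x|\,f(x/|x|)$, observing that these are exactly the $\lam$-derivatives at $\lam=0$ of $E_\lam f$ and $E_{\lam-n}f$ respectively (the vanishing-mean hypothesis is precisely what makes these the relevant analytic-continuation values at $\lam=0$, cf.\ \eqref{zaqcvr8}).

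Next I would differentiate the Fourier-transform identity \eqref{eq5} with respect to $\lam$ and evaluate at $\lam=0$. Since \eqref{zaqcvr8} gives $\lim_{\lam\to 0}\Cs^\lam f=\Cs_{log}f$, and since $\tfrac{d}{d\lam}c_\lam|_{\lam=0}=c_0\log 2\cdot 2^n\pi^{n/2}$-type constants drop out once one normalizes correctly, differentiating the pairing $(E_\lam\Cs^\lam f,\hat\om)=c_\lam(E_{-\lam-n}f,\om)$ under the vanishing-mean hypothesis yields an identity of the schematic form
\be\label{logpair}
\big(E_{\log}\,\Cs_{log}f+(\log\text{-free terms}),\ \hat\om\big)=c_0\,\big(E_{-n}^{\log}f+(\log\text{-free terms}),\ \om\big).
\ee
The ``$\log$-free'' correction terms on each side come from differentiating $E_\lam$ and $c_\lam$ rather than $\Cs^\lam f$; because $\int_{\sn}f\,d_*v=0$, the terms not involving $\log|x|$ either vanish or are themselves Fourier pairs already handled by \eqref{eq5} at $\lam=0$ in the usual sense, so they can be organized to cancel. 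The upshot is that $E_{\log}\Cs_{log}f$ is, up to the same constant $c_\lam/c_{\lam+2}$ bookkeeping as before, the Fourier transform of a homogeneous-with-log distribution, and applying $\Del$ on the Fourier side (which amounts to multiplication by $-|y|^2$, i.e.\ to the shift $\lam\mapsto\lam+2$) reproduces \eqref{abbc} in the limiting form
\be\label{logabbc}
\Del\,[\,|x|^{0}\,(\Cs_{log}f)(x/|x|)+\text{(log term)}\,]=-4\,(\Cs^{-2}f)(x/|x|)|x|^{-2}.
\ee
Restricting to $|x|=1$ and recalling the definition \eqref{lase1} of $\Delta_{-2}=\Delta_\lam|_{\lam=-2}$ as $-\tfrac14\Del E_{0}$, this is exactly \eqref{lase2s}. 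I would also cross-check against Proposition \ref{liut0b} with $\ell=1$, $\lam=-2$: that proposition already gives $\Delta_{-2}\Cs^{0}f=\Cs^{-2}f$ whenever $\Cs^0 f$ is defined, so \eqref{lase2s} is the natural replacement of $\Cs^0 f$ by $\Cs_{log}f$ in the degenerate case, which is a good consistency test.

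The main obstacle I expect is bookkeeping of the logarithmic correction terms in \eqref{logpair}: differentiating $E_\lam f$ in $\lam$ produces $\log|x|\cdot(\,\cdot\,)$ as desired, but differentiating the meromorphic family $\Cs^\lam f$ and the constant $c_\lam$ produces extra non-logarithmic pieces, and one must verify that, under $\int_{\sn}f\,d_*v=0$, these pieces on the two sides match up so that only the ``clean'' identity \eqref{logabbc} survives. A cleaner route that sidesteps most of this is to argue by analytic continuation directly on the already-proven relation \eqref{lase2}, $\Delta_\lam\Cs^{\lam+2}f=\Cs^\lam f$: both $\lam\mapsto\Delta_\lam$ (as a polynomial in $\Delta_S$ with coefficients holomorphic in $\lam$, via \eqref{lase1}) and $\lam\mapsto\Cs^{\lam+2}f$ are holomorphic near $\lam=-2$, while the right-hand side $\Cs^\lam f$ is holomorphic near $\lam=-2$ as well since $-2\notin\{0,2,4,\dots\}$; so \eqref{lase2} already holds at $\lam=-2$ giving $\Delta_{-2}\Cs^{0}f=\Cs^{-2}f$ — except that $\Cs^{0}f$ need not exist. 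The fix is to note $\Cs^{\lam+2}f\to\Cs_{log}f$ as $\lam\to-2$ precisely when $\int_{\sn}f\,d_*v=0$ (this is \eqref{zaqcvr8} with $\lam+2$ in place of $\lam$), and $\Delta_{-2}$ is a fixed differential operator, so passing to the limit in \eqref{lase2} yields $\Delta_{-2}\Cs_{log}f=\lim_{\lam\to-2}\Cs^\lam f=\Cs^{-2}f$. I would present this limiting argument as the proof and relegate the direct Fourier-side computation to a remark.
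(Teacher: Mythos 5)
Your second, ``cleaner route'' is essentially the paper's own argument: one passes to the limit $\lam\to-2$ in the identity (\ref{lase2}) (equivalently, in (\ref{abbc})), using (\ref{zaqcvr8}) to identify $\lim_{\lam\to-2}\Cs^{\lam+2}f=\Cs_{log}f$ under the zero-mean hypothesis, and the regularity of the analytic continuation at the non-pole $\lam=-2$ to identify $\lim_{\lam\to-2}\Cs^{\lam}f=\Cs^{-2}f$. The one step you pass over too quickly is the interchange of the limit with the second-order operator: to conclude that $\Delta_{\lam}\Cs^{\lam+2}f\to\Delta_{-2}\Cs_{log}f$ you need the convergence $\Cs^{\lam+2}f\to\Cs_{log}f$ to hold together with derivatives up to order two (equivalently, holomorphy of the $C^{\infty}(\sn)$-valued family at the removable singularity); the observation that ``$\Delta_{-2}$ is a fixed differential operator'' does not by itself deliver this. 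The paper is organized precisely to sidestep that issue: it takes the limit in the weak sense, pairing $E_{\lam+2}\Cs^{\lam+2}f$ against $\Del\vp$ for $\vp\in C_c^\infty(\rn\setminus\{0\})$, passes to the limit under the integral sign via the representation
\[
(\Cs^{\e}f)(u)=\e\,\gam(\e)\intl_{\sn}f(v)\,\frac{|u\cdot v|^{\e}-1}{\e}\,d_*v ,
\]
which is exactly where the hypothesis $\int_{\sn}f\,d_*v=0$ enters, obtains the distributional identity $\Del E_0\Cs_{log}f=-4\,E_{-2}\Cs^{-2}f$, and only then upgrades it to a pointwise equality using the smoothness of both sides away from the origin. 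If you supply either this weak-limit argument or a proof of convergence in $C^{2}(\sn)$, your proof is complete.

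Your first sketch (differentiating (\ref{eq5}) in $\lam$) is unnecessary and also conceptually off target: $\Cs_{log}f$ is the \emph{value} of the analytically continued family at the removable singularity, not a $\lam$-derivative of it, and the correct limiting identity $\Del E_0\Cs_{log}f=-4\,E_{-2}\Cs^{-2}f$ carries no residual logarithmic terms, contrary to your schematic version; the bookkeeping of ``log-free terms'' that you defer is exactly the part that would have to be carried out. I would drop that sketch entirely and keep only the limiting argument, fortified as above.
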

\begin{proof}   We denote by $C_c^\infty (\rn \setminus \{0\})$ the space of all $C^\infty $ functions on $\rn$ with compact support away from the origin.
Let  $\e=\lam +2$ and assume that $\e$ is a sufficiently small positive number. Then for any function $\vp \in C_c^\infty (\rn \setminus \{0\})$, 
\bea
&&\Big (\Del [E_{\lam +2} \Cs^{\lam +2} f], \vp \Big)=\Big (  (\Cs^{\e} f)(x/|x|), |x|^\e (\Del\vp)(x) \Big)\nonumber\\
&&=\left (\e  \gam (\e)   \intl_{\sn}  f(v)\, \frac{|\frac{x}{|x|} \cdot v|^\e -1}{\e}\, d_*v, \,|x|^\e (\Del\vp)(x)\right).\nonumber\eea
Passing to the limit, we obtain
\[
\lim\limits_{\lam \to -2} \Big (\Del [E_{\lam +2} \Cs^{\lam +2} f], \vp \Big)=  \Big ((\Cs_{log}f)(x/|x|), (\Del\vp)(x)\Big)=   (\Del E_0\Cs_{log}f, \vp).\]
Hence, by (\ref{abb}),
\[
\left (\Del E_0\Cs_{log}f, \vp\right)=-4 \lim\limits_{\lam \to -2}  \,\left(E_{\lam}\, \Cs^\lam f, \vp \right)=-4 \,\left(E_{-2}\, \Cs^{-2}f, \vp \right).\]
Here we recall that for any $f\in C^\infty_{even} (\sn)$ and $\vp \in C_c^\infty (\rn \setminus \{0\}) $,  the function
 \be\label {oiy} \lam \to \left(E_{-2}\, \Cs^{-2}f, \vp \right)\ee
 is meromorphic with the poles $0, 2, 4 \ldots$ and the
 analytic continuation of $\Cs^\lam f$ at any $\lam \notin \{ 0, 2, 4,\ldots\}$ belongs to $C^\infty_{even} (\sn)$.
Now (\ref{oiy}) yields a pointwise equality
\[
(\Del E_0\Cs_{log}f)(x)=-4 \,(E_{-2}\, \Cs^{-2}f)(x), \qquad x \neq 0.\]
Taking restriction to $x\in \sn$, we obtain (\ref{lase2s}).
\end{proof}

\begin{remark}  One can also extend (\ref{lase2h}) to the excluded value $\lam =-2\ell$ provided $\int_{\sn} f(v)\, d_*v=0$. The result is
 \be\label {lase2hlo}
\Delta_{-2\ell, \ell}\,\Cs_{log}f=\Cs^{-2\ell} f,  \qquad \ell= 1,2,3, \ldots.\ee
To justify (\ref{lase2hlo}), we proceed as in the proof of Lemma \ref{zret8n} and get
\[
\lim\limits_{\lam \to -2\ell} (\Del^\ell [E_{\lam +2\ell} \Cs^{\lam +2\ell} f], \vp )=   (\Del^\ell E_0\Cs_{log}f, \vp), \quad \vp \in C_c^\infty (\rn \setminus \{0\}).\]
Because
$\Del^\ell E_{\lam +2\ell} \Cs^{\lam +2\ell} f=(-4)^\ell\,  E_{\lam}\, \Cs^\lam f$ (see (\ref{abbc1})), we continue:
\[
 (\Del^\ell E_0\Cs_{log}f, \vp)=(-4)^\ell\,\lim\limits_{\lam \to -2\ell}  (E_{\lam}\, \Cs^\lam f, \vp)=(-4)^\ell (E_{-2\ell}\, \Cs^{-2\ell}f, \vp ).\]
The latter gives  a pointwise equality
\[
(\Del^\ell E_0\Cs_{log}f)(x)=(-4)^\ell\,(E_{-2\ell}\, \Cs^{-2\ell}f)(x), \qquad x \neq 0,\]
which implies (\ref{lase2hlo}).
\end{remark}

\subsection{Inversion Formulas}

 The  point of departure is the equality
\be\label{lkut}
  \Cs^{-\lam -n} \Cs^\lam f =f \ee
that holds for  $f\in C^\infty_{even} (\sn)$ and any complex $\lam$ satisfying
\[\lam, -\lam -n \neq 0,2,4, \ldots .\]
The expression on the left-hand side is understood in the sense of analytic continuation. The formula (\ref{lkut})  is transparent on spherical harmonics
 \cite [p. 284]{Ru15}, however, it can also be proved using
 the Fourier transform technique; cf. \cite[Theorem 7.7]{Ru13}.

Another ingredient of our algorithm is  the equality
 (\ref{lase2h}) that can be written as
  \be\label{lbkut}  \Cs^{\lam} f =
\Delta_{\lam, \ell} \Cs^{\lam +2\ell} f,  \qquad \lam +2\ell\neq 0,2,4, \ldots \, .  \ee
Here and on, $\Delta_{\lam, \ell}$ is the differential operator (\ref{lase1h}) that will be used  with different $\lam$ and $\ell$.

 Plugging (\ref{lbkut}) in (\ref{lkut}), we obtain
  \be\label{lqbkut}
 f=\Cs^{-\lam -n}\Delta_{\lam, \ell} \Cs^{\lam +2\ell} f,  \qquad  -\lam -n, \,\lam +2\ell\neq 0,2,4, \ldots \, .  \ee
This is our first inversion formula, in which the differential operator is located between two cosine transforms.
 Further, replacing $\lam$ by $-\lam -n$ in (\ref{lbkut}), we have
   \[  \Cs^{-\lam -n} f =
\Delta_{-\lam -n, \ell} \Cs^{-\lam -n +2\ell} f,  \qquad -\lam -n +2\ell\neq 0,2,4, \ldots \, .  \]
 If we replace $f$ by $ \Cs^\lam f$ in the last equality and make use of (\ref{lkut}), we obtain an alternative inversion formula, in which the differential operator is outside:
 \be\label{lkyyut}
 f = \Delta_{-\lam -n, \ell} \Cs^{-\lam -n +2\ell} \Cs^\lam f, \qquad \lam,  -\lam -n +2\ell\neq 0,2,4, \ldots \, .  \ee
 Both  formulas (\ref{lqbkut}) and (\ref{lkyyut}) can be specified for our purposes. Below we give some examples.

 \begin{theorem} \label{jhb67} {\rm (Inversion of the Funk transform)}. Let  $\vp =Ff$,
  \[f\in C^\infty_{even}(S^{n-1}), \qquad n\ge 3, \qquad c_n=\frac{\pi^{1/2}}{\Gamma ((n-1)/2)}.\]

\noindent {\rm (i)} If $n$ is even, $D=(c_n)^2 \Delta_{1-n, (n-2)/2}$, then
\be\label{ltqbkutm}
 f= FD\vp \quad \text {or}\quad f=D F\vp.\ee

\noindent {\rm (ii)} If $n$ is odd, $\tilde D= c_n\Delta_{1 -n, (n-1)/2 } $, then
\be\label{lkut4am} f=\vp_0 +\tilde D\Cs_{log} \vp, \qquad \vp_0=\intl_{\sn} \vp(u)\, d_*u.\ee
 \end{theorem}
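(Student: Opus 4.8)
The plan is to specialize the two master inversion formulas \eqref{lqbkut} and \eqref{lkyyut}—together with their logarithmic analogue \eqref{lase2hlo}—to the exponent $\lam=-1$, which by \eqref{lim1} corresponds (up to the constant $c_n$) to the Funk transform $F$. First I would record that $\Cs^{-1}f=c_nFf=c_n\vp$, so $Ff=\vp$ means $\Cs^{-1}f=c_n\vp$, and conversely any statement about $\Cs^{-1}$ translates into one about $F$ by dividing by $c_n$. The key observation is that the ``companion'' exponent in \eqref{lkut} is $-\lam-n$, and at $\lam=-1$ this equals $1-n$; since $\Cs^{1-n}$ is again a constant multiple of the Funk transform, the inversion formula \eqref{lkut} degenerates into a statement relating $F$ to itself, which is exactly the shape of \eqref{ltqbkutm} and \eqref{lkut4am}.

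For part (i), $n$ even: I would set $\lam=1-n$ and $\ell=(n-2)/2$ in the ``differential operator inside'' formula \eqref{lqbkut}, so that $\lam+2\ell=1-n+(n-2)=-1$ and $-\lam-n=-1$. Then \eqref{lqbkut} reads $f=\Cs^{-1}\Delta_{1-n,(n-2)/2}\Cs^{-1}f$. Substituting $\Cs^{-1}=c_nF$ on both the outer and inner factors gives $f=(c_n)^2 F\,\Delta_{1-n,(n-2)/2}F\vp=FD\vp$ with $D=(c_n)^2\Delta_{1-n,(n-2)/2}$. The ``operator outside'' version $f=DF\vp$ comes the same way from \eqref{lkyyut} with $\lam=1-n$, $\ell=(n-2)/2$, since then $\lam=1-n$, $-\lam-n+2\ell=-1$, and \eqref{lkyyut} becomes $f=\Delta_{1-n,(n-2)/2}\Cs^{-1}\Cs^{1-n}f=\Delta_{1-n,(n-2)/2}\Cs^{-1}(c_n^{-1}\Cs^{-1}f)$; again replacing each $\Cs^{-1}$ by $c_nF$ yields $f=(c_n)^2\Delta_{1-n,(n-2)/2}FFf=DF\vp$. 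One must check the exclusion conditions: with $\lam=1-n$ and $n$ even, $\lam+2\ell=-1$ and $-\lam-n=-1$ are both odd negative, hence not in $\{0,2,4,\dots\}$, so both formulas apply.

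For part (ii), $n$ odd: now $1-n$ is even, so taking $\ell=(n-1)/2$ gives $\lam+2\ell=1-n+(n-1)=0$, which is an excluded value for $\Cs^{\lam+2\ell}$; this is precisely the situation Lemma~\ref{zret8n} is designed for. I would argue that, since $\int_{\sn}\vp(u)\,d_*u=\vp_0$ need not vanish, one first subtracts off the constant: write $\vp=\vp_0+\vp_1$ where $\int_{\sn}\vp_1\,d_*v=0$; equivalently work with $f$ replaced by $f$ minus its mean, noting that $F$ maps the constant $1$ to $1$ so the mean of $f$ equals $\vp_0$. On the mean-zero part, Lemma~\ref{zret8n} (or rather its $\ell$-fold version \eqref{lase2hlo} with $\ell=(n-1)/2$, $2\ell=n-1$, so $-2\ell=1-n$) gives $\Delta_{1-n,(n-1)/2}\Cs_{log}g=\Cs^{1-n}g=c_nFg$ for mean-zero $g$. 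Then I would invoke \eqref{lkut} at $\lam=-1$: $\Cs^{1-n}\Cs^{-1}f=f$, i.e. $\Cs^{1-n}(c_n\vp)=f$; combined with the previous identity applied to $g=c_n\vp$ (legitimate after handling the constant), this yields $f=\vp_0+\tilde D\Cs_{log}\vp$ with $\tilde D=c_n\Delta_{1-n,(n-1)/2}$, the constant term $\vp_0$ reappearing because $\Cs_{log}$ annihilates constants and the mean of $f$ is $\vp_0$.

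The main obstacle is the bookkeeping in part (ii): tracking exactly where the constant $\vp_0$ enters, checking that $\Cs^{1-n}$ and $\Cs_{log}$ interact correctly with the mean-zero decomposition (in particular that $\Cs_{log}$ is well-defined and that \eqref{zaqcvr8}, \eqref{lase2hlo} apply only after projecting out constants), and verifying that no other poles among $\{0,2,4,\dots\}$ are hit by the intermediate exponents. Part (i) is essentially a direct substitution once the exclusion conditions are verified, so the real care is in the odd-dimensional logarithmic case.
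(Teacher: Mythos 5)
Your overall strategy is the paper's own: specialize the master formulas \eqref{lqbkut} and \eqref{lkyyut} at the Funk exponent, and use \eqref{lase2hlo} with $\ell=(n-1)/2$ for odd $n$ after subtracting the mean. Your derivation of $f=FD\vp$ and the structure of part (ii) coincide with the paper's proof. However, your derivation of the second formula $f=DF\vp$ contains a concrete error, which traces back to a false identity that you also invoke (harmlessly) in part (ii).

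The false identity is the claim that $\Cs^{1-n}$ is a constant multiple of the Funk transform: you write $\Cs^{1-n}f=c_n^{-1}\Cs^{-1}f$ in part (i) and $\Cs^{1-n}g=c_nFg$ in part (ii). By \eqref{lim1} it is $\Cs^{-1}$, not $\Cs^{1-n}$, that equals $c_nF$; by \eqref{lkut} with $\lam=-1$, the operator $\Cs^{1-n}$ is the \emph{inverse} of $c_nF$ on even functions, and this is not a scalar multiple of $F$ for $n\ge 3$. Relatedly, the parameter choice $\lam=1-n$, $\ell=(n-2)/2$ in \eqref{lkyyut} does not produce what you display: with these values $-\lam-n+2\ell=n-3\ne -1$ and the differential operator is $\Delta_{-1,(n-2)/2}$, not $\Delta_{1-n,(n-2)/2}$. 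Moreover the identity you do write down, $f=\Delta_{1-n,(n-2)/2}\Cs^{-1}\Cs^{1-n}f$, is false outright, since \eqref{lkut} gives $\Cs^{-1}\Cs^{1-n}f=f$ and your identity would then assert $\Delta_{1-n,(n-2)/2}f=f$; note also that your subsequent substitution produces only one factor of $c_n$, not $(c_n)^2$. The correct choice, as in the paper, is $\lam=-1$, $\ell=(n-2)/2$, which turns \eqref{lkyyut} into $f=\Delta_{1-n,(n-2)/2}\Cs^{-1}\Cs^{-1}f$; replacing each $\Cs^{-1}$ by $c_nF$ then yields $f=DF\vp$ with the stated constant. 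In part (ii) the erroneous clause ``$=c_nFg$'' is not load-bearing and the argument goes through as in the paper; one small correction: the constant term is expendable because $\Delta_{1-n,(n-1)/2}$ annihilates constants (here $\lam+2\ell=0$, so $E_0$ of a constant is a constant on $\rn\setminus\{0\}$ and $\Del^\ell$ kills it for $\ell\ge 1$), not because $\Cs_{log}$ does --- $\Cs_{log}$ maps a nonzero constant to a nonzero constant.
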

  \begin{proof} (i) We first note that  $\Cs^{-1} f= c_n Ff= c_n \vp$; see (\ref{lim1}). Setting $\lam +2\ell=-1$ in (\ref{lqbkut}), we have
 \[
 f=c_n \, \Cs^{2\ell +1 -n}\Delta_{-2\ell -1, \ell} \,\vp,  \qquad  2\ell+1-n\neq 0,2,4, \ldots \, .  \]
 Choosing  $\ell=(n-2)/2$, we obtain
  \be\label{ltqbkut}
 f=c_n \, \Cs^{-1}\Delta_{1-n, (n-2)/2} \,\vp=\Cs^{-1} D\vp,\ee
 which gives the first equality in (\ref{ltqbkutm}).
The second equality follows from (\ref{lkyyut}) if we set $\lam =-1$, $ \ell=(n-2)/2$.

(ii) We have
 $\vp-\vp_0= F[f-\vp_0]$. Hence, by  (\ref{lase2hlo}),
\be\label{selkut3}
f-\vp_0=  c_n\Cs^{1 -n} [\vp-\vp_0]=c_n\Delta_{1 -n, (n-1)/2 }  \Cs_{log} [\vp-\vp_0].\ee
This gives $f=\vp_0 +c_n\Delta_{1 -n, (n-1)/2 } \Cs_{log} \vp$, which coincides with (\ref{lkut4am}).
\end {proof}

\begin{remark} The second equality in (\ref{ltqbkutm}) is in the spirit of Helgason's inversion formula in \cite[Theorem 1.17]{H11}, where the differential operator is placed outside. The first equality in (\ref{ltqbkutm}), where we first apply the differential operator and then the averaging operator, seems to be new. Formulas of both types are well known in the theory of the hyperplane Radon transforms; cf. \cite[Theorems 3.1 and 3.8]{H11}.
\end{remark}

Now consider inversion of the cosine transform   $\Cs^1 f$.

\begin{theorem}  \label{jhb67a} Let  $\vp =\Cs^1 f$, and let $f$, $n$, and $c_n$ have the same meaning as in Theorem \ref{jhb67}.

\noindent {\rm (i)} If $n$ is even, $D_1=c_n \Delta_{1-n, n/2}$, $D_2=c_n \Delta_{-1-n, n/2}$, then
\be\label{ltqbkut}
 f= FD_1\vp\quad \text {or}\quad f=D_2 F\vp.\ee

\noindent {\rm (ii)} If $n$ is odd, $ D_3= \Delta_{-1 -n, (n+1)/2 } $, then
\be\label{lkut4a} f\!=\!c\,\vp_0 +D_3\Cs_{log} \vp, \quad \vp_0\!=\!\!\intl_{\sn}\!\! \!\vp(u)\, d_*u, \quad c\!=\!\frac{\Gamma ((n\!+\!1)/2)}{\Gamma (-1/2)}.\ee
 \end{theorem}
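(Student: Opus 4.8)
The plan is to follow the recipe of Theorem~\ref{jhb67} almost verbatim, the only structural change being that here we start from $\vp=\Cs^1 f$ (so the ``given'' exponent is $\lam=1$), rather than from $\vp=Ff$. For part (i), $n$ even, the first identity comes from the ``operator-inside'' formula~(\ref{lqbkut}): choose $\ell=n/2$ and $\lam=1-n$, so that $\lam+2\ell=1$ and $-\lam-n=-1$. Then $\Cs^{\lam+2\ell}f=\Cs^1 f=\vp$, the outer transform is $\Cs^{-1}=c_n F$ by~(\ref{lim1}), and~(\ref{lqbkut}) reads $f=c_n F\,\Delta_{1-n,\,n/2}\vp=FD_1\vp$; the admissibility conditions in~(\ref{lqbkut}) hold because $-1$ and $1$ are not in $\{0,2,4,\dots\}$. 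The second identity comes from the ``operator-outside'' formula~(\ref{lkyyut}) with $\lam=1$ and again $\ell=n/2$: now $\Cs^\lam f=\vp$, the middle transform is $\Cs^{-\lam-n+2\ell}=\Cs^{-1}=c_n F$, and the outer operator is $\Delta_{-\lam-n,\ell}=\Delta_{-1-n,\,n/2}$, which yields $f=c_n\,\Delta_{-1-n,\,n/2}F\vp=D_2 F\vp$.

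For part (ii), $n$ odd, I would first peel off the mean value $f_0=\int_{\sn}f\,d_*v$. A Fubini computation gives $\Cs^1[f_0]\equiv\gam(1)\,\k_n\,f_0$ with $\k_n=\int_{\sn}|u\cdot v|\,d_*v=\Gamma(n/2)/(\pi^{1/2}\Gamma((n+1)/2))$, so that $\gam(1)\,\k_n=\Gamma(-1/2)/\Gamma((n+1)/2)=1/c$ and $\vp_0=\int_{\sn}\vp\,d_*u=\Cs^1[f_0]=f_0/c$; in particular $\Cs^1[f-f_0]=\vp-\vp_0$ and both $f-f_0$ and $\vp-\vp_0$ have zero mean. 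Applying the inversion identity~(\ref{lkut}) with $\lam=1$ (legitimate since $-\lam-n=-1-n$ is a negative even integer) gives $f-f_0=\Cs^{-1-n}[\vp-\vp_0]$, and then the logarithmic identity~(\ref{lase2hlo}) with $\ell=(n+1)/2$ (an integer because $n$ is odd) rewrites this as $f-f_0=D_3\,\Cs_{log}[\vp-\vp_0]$, $D_3=\Delta_{-1-n,\,(n+1)/2}$.

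The last step is to drop $\vp_0$ from the argument of $\Cs_{log}$. Since $\Cs_{log}$ sends constants to constants, it suffices to note that $D_3$ annihilates constants: for $\lam=-1-n$, $\ell=(n+1)/2$ one has $\lam+2\ell=0$, so in~(\ref{lase1h}) the extension $E_{\lam+2\ell}$ leaves a constant unchanged and $\Del^\ell$ kills it. Hence $D_3\Cs_{log}[\vp-\vp_0]=D_3\Cs_{log}\vp$, and with $f_0=c\vp_0$ we obtain $f=c\vp_0+D_3\Cs_{log}\vp$, i.e.~(\ref{lkut4a}).

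Almost everything here is transcription from the proof of Theorem~\ref{jhb67}. The two points that require actual care are both in the odd case: computing $\int_{\sn}|u\cdot v|\,d_*v$ and combining it with $\gam(1)$ from~(\ref{beren}) to recognize the precise constant $c=\Gamma((n+1)/2)/\Gamma(-1/2)$, and verifying that the particular operator $D_3$ annihilates constants so that $\Cs_{log}$ may be applied to $\vp$ itself rather than to $\vp-\vp_0$. I expect the latter — which rests on the vanishing of the factor $\lam+2\ell$ at the chosen parameters — to be the only genuinely load-bearing observation, the rest being routine bookkeeping with the analytic-continuation constraints and the power weights.
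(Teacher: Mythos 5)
Your proposal is correct and follows essentially the same route as the paper: part (i) is the same specialization of (\ref{lqbkut}) and (\ref{lkyyut}) with $\ell=n/2$, and part (ii) is the same reduction to $\Cs^{-1-n}[\vp-\vp_0]$ followed by (\ref{lase2hlo}) with $2\ell=n+1$. You merely make explicit two details the paper leaves implicit --- the computation of $c$ via $\int_{\sn}|u\cdot v|\,d_*v$ and the fact that $D_3$ annihilates constants (since $\lam+2\ell=0$ there), which justifies replacing $\Cs_{log}[\vp-\vp_0]$ by $\Cs_{log}\vp$ --- and both checks are accurate.
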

  \begin{proof} (i) We set $\lam +2\ell=1$ in (\ref{lqbkut}) to get
  \[
 f=\Cs^{2\ell -1 -n}\Delta_{1-2\ell, \ell} \Cs^{1} f= \Cs^{2\ell -1 -n}\Delta_{1-2\ell, \ell}\, \vp,  \qquad  2\ell-1-n\neq 0,2,4, \ldots \, .  \]
  Then we choose $\ell=n/2$, which gives $  f= \Cs^{-1}\Delta_{1-n, n/2} \,\vp =FD_1 \vp.$

 If we start with  (\ref{lkyyut}) (set $\lam =1$), we obtain
 \[ f = \Delta_{-1 -n, \ell} \Cs^{-1 -n +2\ell} \vp, \qquad   -1 -n +2\ell\neq 0,2,4, \ldots \, .  \]
 Choosing  $\ell=n/2$, we get $f=\Delta_{-1-n, n/2} \Cs^{-1} \vp=D_2 F\vp$.

 (ii) If $n$ is odd,  then  $\vp-\vp_0=\Cs^1 [f - c\, \vp_0]$, and therefore,  by  (\ref{lase2hlo}),
\[
f - c \,\vp_0 =\Cs^{-1 -n} [\vp-\vp_0]=\Delta_{-2\ell, \ell}  \Cs_{log} [\vp-\vp_0], \qquad 2\ell= n+1.\]
This gives  (\ref{lkut4a}).
\end{proof}

\section{The $\lam$-Sine Transform}

The normalized $\lam$-sine transform is defined by
\be\label{tag1.10aach5}
(\S^\lam f)(u)\!=\! \del (\lam) \!\intl_{\sn} (1 \!-  \!|u\cdot v|^2)^{\lam/2} f(v) \,d_*v , \qquad u \in \sn,\ee
\[
 \del (\lam)=\frac{\pi^{1/2}\Gam (-\lam/2)} {\Gam (n/2)\, \Gam ((n-1+\lam)/2)}, \qquad Re \,\lam> 1-n, \qquad \lam \neq 0, 2, 4,\ldots \,;\]
see \cite [formula (5.1.11)]{Ru15}. Here $(1- |u\cdot v|^2)^{1/2}$ is the sine of the angle between the unit vectors $u$ and $v$.
If $f\in C^\infty_{even}(S^{n-1})$, then $\S^\lam f$ extends meromorphically to all complex $\lam$ with the only poles $\lam = 0, 2, 4, \ldots$. In particular,
\be\label{tyy5}
\S^{1-n} f \equiv \lim\limits_{\lam \to 1-n} \S^\lam f=f\ee
and
\be\label{tyy51}
\S^\lam f=c_n  \Cs^\lam F f=c_n  F\,\Cs^\lam f, \qquad c_n=\frac{\pi^{1/2}}{\Gam ((n\!-\!1)/2)}.\ee
These equalities can be found in \cite [Theorem 5.5]{Ru15}, where they have been proved  using spherical harmonics. They can  also be obtained without spherical harmonics;
cf.  \cite[Theorem 6.4 and Corollary 4.7]{Ru13}   for more general  Stiefel manifolds in different notation.

If $\int_{\sn} f(v)\, d_*v=0$, then $\int_{\sn} (Ff)(v)\, d_*v=0$, and we can pass to the limit in (\ref{tyy51}) as $\lam \to 0$. Setting
 \be\label {sin4}
(\S_{log}f)(u) \! =\!\frac{2\pi^{1/2}}{\Gam (n/2)\Gam ((n\!-\!1)/2)} \intl_{\sn} \! f(v)\, \log \frac{1}{1\!-\! |u\cdot v|^2 } \,d_*v,
\ee
we obtain
 \be\label {sin3z}
\S_{log}f  =c_n \Cs_{log}F f. \ee

\begin{lemma} \label{zret8}
Let  $f\in C^\infty_{even}$, and let $\ell$ be a nonnegative integer.

\noindent {\rm (i)} If $\lam \in \bbc$, $ \lam +2\ell\neq 0,2,4, \ldots$, then
\be\label {sin2h}
\Delta_{\lam, \ell}\, \S^{\lam +2\ell} f=\S^{\lam} f,  \ee
where  $\Delta_{\lam, \ell}$ is the  differential operator (\ref{lase1h}). In particular, if $n$ is even, then
\be\label {sin2hv}
\Delta_{1-n, \ell}\, \S^{1-n +2\ell} f=f.  \ee

\noindent {\rm (ii)} If $\int_{\sn} f(v)\, d_*v=0$,  then
 \be\label {lase2sx}
\Delta_{-2} \S_{log}f  =\S^{-2} f.\ee
\end{lemma}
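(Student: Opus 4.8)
The plan is to imitate the arguments already carried out for the $\lam$-cosine transform, using as the bridge the factorization identity $\S^\lam f = c_n\, F\,\Cs^\lam f$ from (\ref{tyy51}), which converts every statement about $\S^\lam$ into a statement about $\Cs^\lam$ composed with the (fixed, $\lam$-independent) averaging operator $F$. For part (i), first I would write $\S^{\lam+2\ell}f = c_n\, F\,\Cs^{\lam+2\ell}f$ and apply the operator $\Delta_{\lam,\ell}$. The key point is that $\Delta_{\lam,\ell}$, being a polynomial in the Beltrami--Laplace operator $\Delta_S$ (as noted right after Proposition \ref{liut0b}), commutes with the Funk transform $F$: indeed $F$ acts as a Fourier multiplier on spherical harmonics, and so does any polynomial in $\Delta_S$, hence $\Delta_{\lam,\ell}F = F\Delta_{\lam,\ell}$. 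Alternatively, one avoids spherical harmonics entirely by invoking (\ref{tyy51}) in the form $c_n\,\Cs^{\lam+2\ell}Ff = c_n\,F\Cs^{\lam+2\ell}f = \S^{\lam+2\ell}f$ and simply applying Proposition \ref{liut0b} to the even function $Ff$. Concretely: $\Delta_{\lam,\ell}\S^{\lam+2\ell}f = \Delta_{\lam,\ell}\,c_n\,\Cs^{\lam+2\ell}(Ff) = c_n\,\Cs^{\lam}(Ff) = \S^{\lam}f$, where the middle equality is exactly (\ref{lase2h}) applied to $Ff \in C^\infty_{even}(\sn)$, valid precisely when $\lam+2\ell \neq 0,2,4,\ldots$. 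This is the cleanest route and it is the one I would write up.

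For the special case (\ref{sin2hv}): take $\lam = 1-n$ in (\ref{sin2h}), so that $\Delta_{1-n,\ell}\,\S^{1-n+2\ell}f = \S^{1-n}f$, and then invoke (\ref{tyy5}), namely $\S^{1-n}f = f$. One must check that the admissibility condition $\lam + 2\ell \neq 0,2,4,\ldots$ holds, i.e. $1-n+2\ell \neq 0,2,4,\ldots$; since $n$ is even, $1-n+2\ell$ is odd, so the condition is automatic for every nonnegative integer $\ell$. (When $n$ is odd this fails at $\ell = (n-1)/2$, which is why the hypothesis $n$ even is imposed.)

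For part (ii), the argument parallels Lemma \ref{zret8n}. If $\int_{\sn}f\,d_*v = 0$ then also $\int_{\sn}(Ff)\,d_*v = 0$ (since $F$ preserves the mean), so Lemma \ref{zret8n} applies to $Ff$ and gives $\Delta_{-2}\,\Cs_{log}(Ff) = \Cs^{-2}(Ff)$. Multiplying by $c_n$ and using (\ref{sin3z}), i.e. $\S_{log}f = c_n\,\Cs_{log}Ff$, together with the $\lam = -2$ case of (\ref{tyy51}), i.e. $\S^{-2}f = c_n\,\Cs^{-2}Ff$ (understood as analytic continuation, legitimate since $-2$ is not a pole), one obtains $\Delta_{-2}\,\S_{log}f = \S^{-2}f$. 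The only genuine point requiring care — and the main obstacle in making this rigorous — is the interchange of the differential operator $\Delta_{-2}$ with the various limits in $\lam$ and with the operator $F$; this is handled exactly as in the proof of Lemma \ref{zret8n}, by testing against $\vp \in C_c^\infty(\rn\setminus\{0\})$, using that $\lam \mapsto (E_{-2}\,\Cs^{-2}(Ff),\vp)$ is meromorphic with poles only at $0,2,4,\ldots$, and then passing from the distributional identity to the pointwise one by smoothness away from the origin. Since none of this requires any new idea beyond what is already in place, I would present part (ii) briefly, noting that it reduces to Lemma \ref{zret8n} via (\ref{sin3z}).
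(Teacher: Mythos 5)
Your proposal is correct and follows essentially the same route as the paper: part (i) is obtained by writing $\S^{\lam+2\ell}f=c_n\,\Cs^{\lam+2\ell}(Ff)$ via (\ref{tyy51}) and applying Proposition \ref{liut0b} to $Ff$, and part (ii) by applying Lemma \ref{zret8n} to $Ff$ together with (\ref{sin3z}) and the $\lam=-2$ case of (\ref{tyy51}). Your added checks (that $1-n+2\ell$ is odd when $n$ is even, and that $F$ preserves the zero-mean condition) are correct and consistent with what the paper uses implicitly or states earlier in Section 3.
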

\begin{proof} The first statement follows if we combine
(\ref{tyy51}) with (\ref{lase2h}). Further, by
(\ref{tyy51}) and  (\ref{lase2s}),
\[
\S^{-2} f\!=\!c_n  \Cs^{-2} F f\!=\!c_n \Delta_{-2} \Cs_{log} F f\!=\!\Delta_{-2}\S_{log}f.\]
The last equality holds by (\ref{sin3z}). This completes the proof.
\end{proof}

\section{Lower-dimensional Transforms}\label{sphere2}

The Funk transform $Ff$ and the  $\lam$-cosine transform $\Cs^\lam f$ in Section \ref{sphere1}
are associated with  cross-section of $\sn$ by  hyperplanes of codimension one. Below we consider similar transforms associated with lower-dimensional cross-sections of  codimension $k>1$. The corresponding $(n-k)$-dimensional plane in $\rn$ has an equation $u^\top x=0$, where $u$ is an $n\times k$ matrix  satisfying
$u^\top u=I_k$, $I_k$ being the identity $k\times k$ matrix, and $u^\top$  the transpose of $u$. The set of all such matrices forms
 the Stiefel manifold $\vnk$ of orthonormal $k$-frames in $\rn$.

Generalizing (\ref{af}), we introduce a dual pair of integral transforms
\be\label{afsx}    (\Cs^\lam_{k} f)(u)= \gam_k(\lam)  \intl_{\sn}  f(v)
|u^\top v|^{\lam} \,d_*v,\qquad u \in \vnk,  \ee
\be\label{afsxd}    (\stackrel{*}{\Cs}\!{}^\lam_{k} \vp)(v)= \gam_k(\lam)  \intl_{\vnk} \vp (u)
|u^\top v|^{\lam} \,d_*u,\qquad v \in \sn,  \ee
where  $|u^\top v|$ is the length of the $k$-vector $u^\top v$,
\be\label{berenj}
\gam_k(\lam)\!=\!\frac{\pi^{1/2}\,\Gamma( -\lam/2)}{\Gamma (n/2)\, \Gamma ((\lam +k)/2)}, \qquad \Re \, \lam \!>\!-k, \quad \lam
\!\neq \!0,2,4, \ldots .\ee

If $f$ and $\vp$ are smooth, then the corresponding integrals extend meromorphically to all complex $\lam \neq 0, 2, 4, \ldots \,$.
The relevant Funk type transform and its dual are defined by
\be\label{Funkkk}
 (F_{k}f)(u)=\!\!\!
 \intl_{\{v\in S^{n-1} :\,
 u^\top v =0\}} \!\!\!\! f(v) \,d_u v,\qquad u \in \vnk,  \ee
\be\label{Funkkkdu}
 ( \stackrel{*}{F}_{k} \vp)(v)=\!\!\! \intl_{\{u\in\vnk :\,
 u^\top v =0\}} \!\!\!\! \vp(u) \,d_v u,\qquad v \in \sn,
\ee
  $d_u v$ and $d_v u$ being the corresponding probability measures.

Operators (\ref{Funkkk}) and   (\ref{Funkkkdu})  actually coincide with the totally geodesic transform and its dual associated with  $(n-k-1)$-dimensional totally geodesic submanifolds of $\sn$. The latter interpretation was used, e.g., in \cite {H11, Ru02}.

 By analytic continuation,
\be\label{tnnk}
\Cs^\lam_{k} f \big |_{\lam =-k} =\mu_k F_{k} f, \qquad \stackrel{*}{\Cs}\!{}^\lam_{k} \vp \big |_{\lam =-k} =\mu_k\!\stackrel{*}{F}_{k} \vp, \ee
\[ \mu_k=\frac{\pi^{1/2}}{\Gam ((n\!-\!k)/2)}.\]

As in (\ref{tyy51}),  if $f\in C^\infty (S^{n-1})$, then for all complex $\lam$ satisfying  $\lam \neq 0, 2, 4, \ldots$
we have
\be\label{tyy51k}
\S^\lam f=c_{n,k}   \stackrel{*}{\Cs}\!{}^\lam_{k} F_{k} f= c_{n,k} \stackrel{*}{F}_{k} \! \Cs^\lam_{k} f, \quad c_{n,k}=\frac{\Gam (k/2)}{\Gam ((n-1)/2)}.\ee
In different notation,
the equalities (\ref{tnnk}) and (\ref{tyy51k})
can be found in \cite [formulas (1.8) and (1.12)]{Ru02}; see also \cite[formulas (7.10), (7.18), and Theorem 4.5]{Ru13} for similar operators on Stiefel manifolds.

 Setting $\lam =1-n$ and noting that  $\S^{1-n} f =f$ (see (\ref{tyy5})), we obtain
\be\label{tyy51ky}
c_{n,k}   \stackrel{*}{\Cs}\!{}^{1-n}_{k} F_{k} f=\S^{1-n}  f=f,
\ee
where  $\stackrel{*}{\Cs}\!{}^{1-n}_{k}$ and $\S^{1-n}$ are understood in the sense of analytic continuation.

\subsection{Inversion Formulas}

As in the case $k=1$, here we  have several options. We review only some of them and leave the rest to the interested reader.

\begin{theorem} \label{kaksa} Let  $f\in C^\infty_{even}(S^{n-1})$,  $1\le k\le n-1$,
\[
 c_{n,k}=\frac{\Gam (k/2)}{\Gam ((n-1)/2)}, \qquad \mu_k=\frac{\pi^{1/2}}{\Gam ((n\!-\!k)/2)}, \qquad c=c_{n,k}\mu_k.\]

 \noindent {\rm (i)} If $n-k$ is  odd and $g =\stackrel{*}{F}_k F_k f$,  then
\be\label {lase1hss7}
f(v)= c \,(\Delta_{1-n, \ell} \,g)(v)= c\left (-\frac{1}{4}\right )^\ell  (\Del^\ell E_{-k} \,g)(x)\Big |_{x=v},  \ee
where $ \ell=(n-k-1)/2$.

\noindent {\rm (ii)} If $n-k$ is  even, $k>1$, and $h= \stackrel{*}{\Cs}\!{}^{1-k}_k F_k f$,  then
\be\label {lbbgs}
f(v)=  c_{n,k}(\Delta_{1-n, \ell} \,h)(v)= c_{n,k} \left (-\frac{1}{4}\right )^\ell  (\Del^\ell E_{1-k +2\ell} \,h)(x)\Big |_{x=v},\ee
where  $ \ell=(n-k)/2$.
\end{theorem}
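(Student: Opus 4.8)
The plan is to reduce the lower-dimensional inversion to the one-dimensional machinery already developed in Sections 2 and 3, using the intertwining relations \eqref{tyy51k} and \eqref{tyy51ky} together with the operator identity \eqref{lase2h}. The key observation is that $\S^{1-n}f = f$ (see \eqref{tyy5}), so everything hinges on expressing the composition $\stackrel{*}{\Cs}\!{}^{1-n}_k F_k f$ (or $\stackrel{*}{F}_k F_k f$) as $\Delta_{\lam,\ell}$ applied to a $\lam$-cosine-type transform with a ``good'' exponent, and then invoking \eqref{tyy51ky}.

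For part (i), where $n-k$ is odd: I would start from \eqref{tyy51ky}, which reads $f = c_{n,k}\,\stackrel{*}{\Cs}\!{}^{1-n}_k F_k f$, and observe that since $n-k$ is odd, the exponent $1-n = -k + (1-n+k) = -k - 2\ell$ with $\ell = (n-k-1)/2$, so we want to descend from exponent $-k$ (which by \eqref{tnnk} is $\mu_k F_k$, i.e.\ where $\stackrel{*}{\Cs}\!{}^{-k}_k F_k f = \mu_k^{-1}\cdot(\text{something})$; more precisely $\stackrel{*}{\Cs}\!{}^{-k}_k \psi = \mu_k \stackrel{*}{F}_k \psi$) down to exponent $1-n$. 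The clean route: apply \eqref{lase2h} in the form $\Cs^{\lam}_\star = \Delta_{\lam,\ell}\Cs^{\lam+2\ell}_\star$ — but this identity as stated is for the \emph{one-variable} transform $\Cs^\lam$ on $\sn$. The right move is therefore to rewrite $\stackrel{*}{F}_k F_k f = \stackrel{*}{\Cs}\!{}^{-k}_k F_k f \cdot \mu_k / \mu_k$ (via \eqref{tnnk}) and then use \eqref{tyy51k} with $\lam = 1-n$ to get $f = c_{n,k}\stackrel{*}{\Cs}\!{}^{1-n}_k F_k f$, and separately $c_{n,k}\mu_k^{-1}\stackrel{*}{F}_k F_k f = c_{n,k}\stackrel{*}{\Cs}\!{}^{-k}_k F_k f$. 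So the content of (i) is that the weighted Laplacian $\Delta_{1-n,\ell}$ intertwines $\stackrel{*}{\Cs}\!{}^{-k}_k F_k f$ with $\stackrel{*}{\Cs}\!{}^{1-n}_k F_k f$, i.e.\ that \eqref{lase2h}-type recursion holds for the \emph{composed} operator $\stackrel{*}{\Cs}\!{}^{\lam}_k F_k$ acting on the sphere variable. I would prove this exactly as Proposition \ref{liut0b}: take the Fourier transform of the homogeneous extension $E_{\lam+2\ell}[\stackrel{*}{\Cs}\!{}^{\lam+2\ell}_k F_k f]$, apply $\Del^\ell$, use the Semyanistyi-type identity \eqref{eq5} repeatedly, and pass back to the pointwise equality — the point being that $\stackrel{*}{\Cs}\!{}^{\lam}_k F_k f$, as a function of the sphere variable $v$, has a homogeneous extension whose Fourier transform is governed by the same $c_\lam$ ratios because the kernel is still a pure power $|u^\top v|^\lam$ integrated against a measure independent of the homogeneity. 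Then set $\lam = -k$, $\lam+2\ell = 1-n$, multiply by $c_{n,k}\mu_k = c$, and \eqref{tyy51ky} finishes it.

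For part (ii), where $n-k$ is even and $k>1$: now $1-n = (1-k) - 2\ell$ with $\ell = (n-k)/2$, so the natural starting exponent is $1-k$ rather than $-k$; this is why $h = \stackrel{*}{\Cs}\!{}^{1-k}_k F_k f$ appears instead of $\stackrel{*}{F}_k F_k f$, and why we need $k>1$ (so that $1-k < 0$ and the integral defining $\stackrel{*}{\Cs}\!{}^{1-k}_k$ converges, or at least is the relevant analytic continuation avoiding the pole at $0$). The argument is identical: the same Fourier-analytic recursion gives $\stackrel{*}{\Cs}\!{}^{1-n}_k F_k f = \Delta_{1-n,\ell}\,\stackrel{*}{\Cs}\!{}^{1-k}_k F_k f = \Delta_{1-n,\ell}\, h$, and multiplying by $c_{n,k}$ and using \eqref{tyy51ky} yields \eqref{lbbgs}; the explicit form $c_{n,k}(-1/4)^\ell(\Del^\ell E_{1-k+2\ell}h)(x)|_{x=v}$ is just the definition \eqref{lase1h} of $\Delta_{1-n,\ell}$ unwound, noting $1-n+2\ell = 1-k$ so the homogeneity in the extension is $(1-n)+2\ell = 1-k$, consistent with $E_{1-k+2\ell}$ after the shift — I would double-check the index bookkeeping here, since \eqref{lase1h} writes $\Delta_{\lam,\ell}$ with extension exponent $\lam+2\ell$, and with $\lam = 1-n$ that is $1-n+2\ell$; for (ii) $2\ell = n-k$ so this is $1-k$, matching; for (i) $2\ell = n-k-1$ so $\lam+2\ell = -k$, matching the $E_{-k}$ in \eqref{lase1hss7}.

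The main obstacle I anticipate is establishing the recursion $\stackrel{*}{\Cs}\!{}^{\lam}_k F_k f = \Delta_{\lam,\ell}\,\stackrel{*}{\Cs}\!{}^{\lam+2\ell}_k F_k f$ cleanly, i.e.\ justifying that the Fourier-transform identity \eqref{eq5} applies verbatim with $f$ replaced by the function $F_k f$ on the sphere and with the \emph{dual} transform $\stackrel{*}{\Cs}\!{}^\lam_k$ in place of $\Cs^\lam$. This requires checking that $E_{-\lam-n}(F_k f)$ is a well-defined tempered distribution whose Fourier transform is $c_\lam E_\lam(\stackrel{*}{\Cs}\!{}^\lam_k F_k f)$ in the analytic-continuation sense — essentially that $\stackrel{*}{\Cs}\!{}^\lam_k$ is, like $\Cs^\lam$, the spherical avatar of convolution with $|x|^\lam$ on $\rn$ (which it is, since $\gam_k(\lam)|u^\top v|^\lam$ integrated over $u\in\vnk$ against a probability measure is a radial-times-zonal object reproducing the power kernel after the $\S^\lam$-factorization \eqref{tyy51k}). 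Once this Fourier-side identity is in hand, the rest is the same ``smooth away from the origin, hence pointwise'' argument as in the proof of Proposition \ref{liut0}, and the theorem follows by specializing $\lam$ and $\ell$ and invoking \eqref{tyy51ky}.
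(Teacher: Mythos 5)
Your argument is correct and is essentially the paper's own proof: choose $\ell$ so that $1-n+2\ell$ equals $-k$ in case (i) and $1-k$ in case (ii), identify the resulting transform with the data $g$ or $h$ via (\ref{tyy51k}) and (\ref{tnnk}), and conclude from $\S^{1-n}f=f$. The ``main obstacle'' you single out, namely the recursion $\stackrel{*}{\Cs}\!{}^{\lam}_k F_k f=\Delta_{\lam,\ell}\,\stackrel{*}{\Cs}\!{}^{\lam+2\ell}_k F_k f$, requires no fresh Fourier-analytic work: by (\ref{tyy51k}) it is exactly Lemma \ref{zret8}(i) for $\S^{\lam}$, which the paper already obtains by writing $\S^{\lam}f=c_n\,\Cs^{\lam}Ff$ (formula (\ref{tyy51})) and applying Proposition \ref{liut0b} to $Ff$. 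On your index check in (ii): by (\ref{lase1h}) the extension exponent of $\Delta_{1-n,\ell}$ is $1-n+2\ell=1-k$, so the rightmost expression in (\ref{lbbgs}) should involve $E_{1-k}\,h$ (the printed $E_{1-k+2\ell}$ is a slip); your computation $1-n+2\ell=1-k$ is the correct one.
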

\begin{proof}   Let $\vp=F_k f$. By the first equality in (\ref{tyy51k}) and the second equality in  (\ref{tnnk}),
\be\label {flbbgs}
\S^{-k} f=c_{n,k} \stackrel{*}{\Cs}\!{}^{-k}_k \vp=c \stackrel{*}{F}_k \vp, \ee
\[     c=c_{n,k}\mu_k=\frac{\pi^{1/2}\Gam (k/2)}{\Gam ((n\!-\!1)/2)\, \Gam ((n\!-\!k)/2)}.\]
  If $n-k$ is  odd, then, by (\ref{sin2hv}),
\[
f=\Delta_{1-n, \ell}\, \S^{1-n+2\ell} f =\Delta_{1-n, \ell}\, \S^{-k} f\quad \text{\rm if $\ell=(n-k-1)/2$}.\]
Hence, by (\ref {flbbgs}), $f=\Delta_{1-n, \ell}\, \S^{-k} f= c \,\Delta_{1-n, \ell} \stackrel{*}{F}_k \vp$,
as desired.

 If $n-k$ is even, we choose  $\ell$ in (\ref{sin2hv}) so that $1-n+2\ell=1-k$  i.e. $\ell=(n-k)/2$. Then $f=\Delta_{1-n, \ell} \,\S^{1-k} f$.
Further, for $\vp=F_k f$,  the first equality (\ref{tyy51k}) with $\lam =1-k$ yields  $\S^{1-k} f=c_{n,k} \stackrel{*}{\Cs}\!{}^{1-k}_k \vp$.
Hence $f= c_{n,k}\Delta_{1-n, \ell} \,\stackrel{*}{\Cs}\!{}^{1-k}_k \vp $, which gives (\ref{lbbgs}).
\end{proof}

\begin{remark} Formula  (\ref{lbbgs}) is new.
It is inapplicable if $k=1$ because the coefficient $\gam_k (\lam) $  in  $ \stackrel{*}{\Cs}\!{}^{\lam}_k \vp$ has a pole at $\lam =0$. The  case $k=1$ with $n$ odd falls into the scope of (\ref{lkut4a}), which invokes  the logarithmic cosine transform.
\end{remark}

\begin{remark} Formula (\ref{lase1hss7}) agrees with the second equality in (\ref{ltqbkutm}) for $k=1$ and has the same structure as Helgason's inversion formula in \cite [Theorem 1.17]{H11}. Specifically, in all these  formulas  the differential operator is placed to the left of the dual transform $\stackrel{*}{F}_k$. On the other hand, Theorem \ref{jhb67} (i) reveals a remarkable intertwining property:
\be\label{plic} FD\vp=D F\vp,  \qquad \vp=Ff, \qquad D=(c_n)^2 \Delta_{1-n, (n-2)/2}.\ee
  This observation suggests the following problem  motivated by  Theorem \ref{kaksa} (i).
\end{remark}

\noindent{\bf Open Problem:} Which differential operator $\tilde D_{k}$ on the Stiefel manifold $\vnk$ is intertwined with $D_k=c\Delta_{1-n, (n-k-1)/2}$  by the dual Funk transform $\stackrel{*}{F}_k$,
i.e., 
 \be\label{plic1}
  \stackrel{*}{F}_k \tilde D_{k} \vp=D_k \stackrel{*}{F}_k \vp, \qquad \vp=F_kf, \quad \text{\rm if  $n-k$ is odd}?\ee

\subsubsection{Associated Cosine Transforms}

We restrict to inversion of   $\Cs^1_k f$.  If $n$ is even, we proceed as above. Specifically, by (\ref{sin2hv}),
\[
f=\Delta_{1-n, \ell}\, \S^{1-n+2\ell} f =\Delta_{1-n, \ell}\, \S^{1} f\quad \text{\rm if $\ell=n/2$}. \]
By the second equality in (\ref{tyy51k}) with $\lam =1$,
\be\label {lbbgs2} \S^{1} f=c_{n,k} \stackrel{*}{F}\!{}_k \Cs^1_k f, \qquad  c_{n,k}=\frac{\Gam (k/2)}{\Gam ((n-1)/2)}.\ee
Hence the desired inversion formula has the form
\be\label {lbbns}
f= c_{n,k}\Delta_{1-n, n/2}\stackrel{*}{F}\!{}_k  \Cs^1_k f,\ee
or
\[
f(v)= c_{n,k} \left (-\frac{1}{4}\right )^{n/2}  (\Del^{n/2}  E_{1 +n} \, \psi)(x)\Big |_{x=v}, \qquad \psi =\stackrel{*}{F}\!{}_k  \Cs^1_k f.\]

If $n$ is odd, the inversion formula is more complicated and we use the factorization (\ref{tyy51}) with $\lam =1$. Then, by (\ref{lbbgs2}),
\[
c_{n,k} \stackrel{*}{F}\!{}_k \Cs^1_k f=\S^{1} f=c_n  \Cs^1 F f, \qquad c_n\!=\!\frac{\pi^{1/2}}{\Gam ((n\!-\!1)/2)}.\]
The latter gives the desired inversion in the product form
\be\label {lbbns3}
f=c\,  F^{-1} (\Cs^1)^{-1}\stackrel{*}{F}\!{}_k \Cs^1_k f, \qquad c=c_{n,k}/c_n = \pi^{-1/2}\Gam (k/2),\ee
where the inverse operators  $F^{-1}$ and $(\Cs^1)^{-1}$ can be defined, e.g.,  by Theorems \ref {jhb67} and \ref{jhb67a}, respectively.

\vskip 0.3 truecm
\noindent {\bf Acknowledgement.} I would like to thank  Gestur \'Olafsson for useful discussions.

\end{document}